\documentclass[12pt]{article}

\usepackage{amsmath,amsthm,amssymb,xcolor,graphicx,subcaption,comment,authblk}
\usepackage[TS1,T1]{fontenc}
\usepackage[pagewise]{lineno}
\usepackage{multirow}
\usepackage[normalem]{ulem}

\newtheorem{thm}{Theorem}[section]
\newtheorem{lem}{Lemma}[section]

\newtheorem{Def}{Definition}[section]

\newcommand{\R}{\mathbb{R}}
\newcommand{\N}{\mathbb{N}}
\newcommand{\p}{\partial}

\newcommand{\M}{\mathcal{M}}

\newcommand{\xM}{x_{\max}}

\newcommand{\support}{\text{supp}}
\newcommand{\rint}{\int_{\R^+}}
\newcommand{\sumj}{\sum_{j=1}^{J}}
\newcommand{\sumi}{\sum_{i=1}^{J}}

\textwidth 6.5 in
\textheight 9 in
\topmargin -0.5 in 
\evensidemargin 0in 
\oddsidemargin 0in

\begin{document}

\title{\textbf{High Resolution Finite Difference Schemes for a Size Structured Coagulation-Fragmentation Model in the
Space of Radon Measures}}

\author[$\dagger$]{Azmy S. Ackleh} 
\author[$\dagger,\star$]{Rainey Lyons}
\author[$\ddagger$]{Nicolas Saintier}

\affil[$\dagger$]{Department of Mathematics, University of Louisiana at Lafayette,\ Lafayette, Louisiana 70504, U.S.A.\\ackleh@louisiana.edu}

\affil[$\star$]{Department of Mathematics and Computer Science, Karlstad University, 651 88 Karlstad, Sweden \\ rainey.lyons@kau.se}

\affil[$\ddagger$]{Departamento de Matem\'atica, Facultad de Ciencias Exactas y Naturales,  
Universidad de Buenos Aires\\ (1428) Pabell\'on I - Ciudad Universitaria - Buenos Aires - ARGENTINA\\nsaintie@dm.uba.ar}

\maketitle

\abstract{In this paper we develop explicit and semi-implicit second-order high-resolution  finite difference schemes for a structured coagulation-fragmentation model formulated on the space of Radon measures. We prove the convergence of each of the two schemes to the unique weak solution of the model. We perform numerical simulations to demonstrate that the second order accuracy is achieved by both schemes.  }

\paragraph{Keywords:} A Coagulation-Fragmentation Equation, Size Structured Populations, Radon Measures Equipped with Bounded-Lipschitz Norm, Finite Difference Schemes, High Resolution Methods 

\paragraph{AMS Subject Classification:}  65M06, 35L60, 92D25

\section{Introduction}\label{SecIntro}

Coagulation-fragmentation (CF) equations have been used to model many physical and biological phenomenon \cite{BurdJack,Ald}. In particular, when combined with transport terms, these equations can be used to model the population dynamics of oceanic phytoplankton \cite{AckFitz,Ack,RudWie}. Setting such models in the space of Radon measures allows for the unified study of both discrete and continuous structures. Not only are the classical discrete and continuous CF equations special cases of the measure valued model (as shown in \cite{AckLyonSaint2}), but this setting allows for a mixing of the two structures which has become of interest in particular applications \cite{BairdSuli1,BairdSuli2}. 

With the above applications in mind, numerical schemes to solve CF equations are of great importance to researchers. In particular, finite difference methods offer numerical schemes which are easy to implement and approximate the solution with a high order of accuracy. The latter benefit is especially important in the study of stability and optimal control of such equations. 

The purpose of this article is to make improvements on the two of the schemes presented in \cite{AckLyonSaint3}, namely the fully explicit and semi-implicit schemes. These schemes are shown to have certain advantages and disadvantages discussed in the aforementioned study. In particular, the fully explicit scheme has the qualitative property of conservation of mass through coagulation. On the other hand, the semi-implicit scheme has a more relaxed Courant–Friedrichs–Lewy (CFL) condition which does not depend on the initial condition. We have decided not to attempt to improve the third scheme presented in \cite{AckLyonSaint3} as there does not seem to be a significant advantage of the named conservation law scheme to outweigh the drastic computational cost. 

As the state space is highly irregular, the improvement of these schemes must be handled with care. As shown in \cite{LeV}, discontinuities and singularities in the solution can cause drastic changes in not only the order of convergence of the scheme, but also in the behavior of the scheme. Such phenomenon is demonstrated in \cite{AckLyonSaint, LeV}. To handle these issues, we turn to a high resolution scheme studied with classical structured population models (i.e. without coagulation-fragmentation) in \cite{Shen, AckVin, AckLyonSaint}. This scheme makes use of a minmod flux limiter to control any oscillatory behavior of the scheme caused by irregularities.

The layout of the paper is as follows. In Section \ref{SecPrelim} we present any notation and preliminary results about the model and state space used throughout the paper. In Section \ref{SecModel} we describe the model and state all assumptions imposed on the model parameters. In Section \ref{SecNumMeth}, we present the numerical schemes, their CFL conditions and state the main Theorem of the paper. Finally, we test the convergence rate of the schemes against well-known examples in Section \ref{Sec_Numerics}.

\section{Notation}\label{SecPrelim}

We make use of standard notations for function spaces. The most common examples of these are $C^1(\R^+)$ for the space of real valued continuously differentable functions and $W^{1,\infty}(\R^+)$ for the usual Sobelov space. The space of Radon measure will be denoted with $\M(\R^+)$ with $\M^+(\R^+)$ representing it positive cone. This space will be equipped with the Bounded-Lipschitz (BL) norm given by 
\[ \|\mu \|_{BL} := \sup_{\|\phi\|_{W^{1,\infty}}\leq 1} \left \lbrace \rint \phi(x) \mu(dx) : \phi \in W^{1,\infty}(\R^+) \right \rbrace .\]
Another norm of interest to this space is the well studied Total Variation (TV) norm given by 
\[ \Vert \nu \Vert_{TV} = |\nu|(\R^+) = \sup_{\| f \|_{\infty}\leq 1} \left\lbrace\int_{\R^+}f(x) \nu(dx) : f\in C_c(\R^+)  \right\rbrace. \]
For more information about these particular norms and their relationship we direct the reader to \cite{GwCzTh, DulGwBook}. 
For lucidity, we use operator notation in place of integration when we believe it necessary, namely 
\[(\mu,f) := \int_{A} f(x) \mu(dx),\] 
where the set $A$ is the support of the measure $\mu$.
Finally, we denote the minmod function by $\text{mm}(a,b)$ and use the following definition
\[ \text{mm}(a,b) := \frac{\text{sign}(a) + \text{sign}(b)}{2}\max(|a|,|b|).\]

\section{Model and Assumptions}\label{SecModel}

The model of interest is the size-structured coagulation fragmentation model given by
\begin{equation}\label{CoagModel}
\left \lbrace
\begin{split}
&\partial_t \mu + \partial_x(g(t,\mu)\mu) + d(t,\mu)\mu = K[\mu] + F[\mu], \qquad (t,x) \in (0,T)\times (0,\infty), \\
&g(t,\mu)(0) D_{dx}\mu (0) = \int_{\R^+}\beta(t,\mu)(y) \mu(dy), \hspace{1.7cm}  t\in [0,T], \\ 
&\mu(0)=\mu_0 \in \M^+(\R^+),
\end{split}. \right.
\end{equation} 
where $\mu(t) \in \M^+(\R^+)$ represents individuals' size distribution at time $t$ and the functions $g, d, \beta$ are their growth, death, and reproduction rate. 
The coagulation and fragmentation processes of a population distributed according to $\mu \in \M^+(\R^+)$ are modeled by the measures $K[\mu]$ and $F[\mu]$ 
given 
$$ (K[\mu],\phi) = \frac12 \rint \rint \kappa(y,x)\phi(x+y)\,\mu(dx)  \,\mu(dy) - \rint \rint \kappa(y,x)\phi(x)\,\mu(dy) \,\mu(dx) $$ 
and 
$$ (F[\mu],\phi) = \int_{\R^+} (b(y,\cdot),\phi)    a(y)\,\mu(dy) - \rint a(y)\phi(y)\mu(dy) $$ 
for any test function $\phi$. 
Here $\kappa(x,y) $ is the rate at which individuals of size $x$ coalesce with individuals of size $y$, 
$a(y)$ is the global fragmentation rate of individuals of size $y$, 
and $b(y,\cdot)$ is a measure supported on $[0,y]$ such that $b(y,A)$ represents the probability a particle of size $y$ fragments to a particle with size in the Borel set $A$.

\begin{Def}
Given $T\geq 0$, we say a function $\mu \in C([0,T], \mathcal{M^+}(\R^+))$ is a \textit{weak solution} to \eqref{CoagModel} if for all $\phi \in (C^1 \cap W^{1,\infty})([0,T]\times \R^+)$ and for all $t\in [0,T]$, the following holds:
\begin{equation}\label{Def_Solution}
\begin{split}
& \int_{0}^{\infty} \phi(t,x)\mu_t(dx) - \int_{0}^{\infty}\phi(0,x)\mu_0(dx)=   \\
&\quad \int_0^t \int_{0}^{\infty} \left[ \p_t \phi(s,x) +g(s,\mu_s)(x)\p_x \phi(s,x) - d(s,\mu_s)(x)\phi(s,x)\right]\mu_s(dx)ds \\
 &\quad \quad +\int_0^t (K[\mu_s]+F[\mu_s],\phi(s,\cdot))\,ds  
+ \int_0^t \int_{0}^{\infty}  \phi(s,0)\beta(s,\mu_s)(x) \mu_s(dx) ds.  
\end{split}
\end{equation}
\end{Def}
For the numerical scheme, we will restrict ourselves to a finite domain, $[0,\xM]$. Thus, we impose the following assumptions on the growth, death and birth functions: 
\begin{enumerate}
\item[(A1)] For any $R>0$, there exists $L_R>0$ such that for all $\| \mu_i\|_{TV}\leq R$ and  $t_i\in [0,\infty)$ ($i=1,2$) the following hold for $f = g,d,\beta$
 \[ \|f(t_1, \mu_1) - f(t_2 , \mu_2)\|_{\infty} \leq L_R(|t_1 - t_2| + \| \mu_1 - \mu_2 \|_{BL}), \]

\item[(A2)]\label{zetaDef} There exists $\zeta>0$ such that for all $T>0$ 
\[\sup_{t\in [0,T]} \sup_{\mu \in \mathcal{M^+}(\R^+)} \| g(t,\mu) \|_{W^{1,\infty}} +\| d(t,\mu) \|_{W^{1,\infty}}+\| \beta(t,\mu) \|_{W^{1,\infty}}<\zeta,  \] 

 \item[(A3)] For all $(t,\mu)\in [0,\infty)\times \mathcal{M^+}(\R^+)$, 
 \[g(t,\mu)(0) >0 \quad \text{and} \quad g(t,\mu)(\xM)=0\]
 for some large $\xM>0$.
\end{enumerate}

We assume that the coagulation kernel $\kappa$ satisfies the following assumption:
 \begin{enumerate}
\item[(K1)] $\kappa$ is symmetric, nonnegative, bounded by a constant $C_\kappa$, and globally Lipschitz with 
		   Lipschitz constant $L_\kappa$. 
\item[(K2)]   \( \kappa(x,y) = 0  \text{ whenever } x+y > \xM.\)
\end{enumerate}

We assume that the fragmentation kernel satisfies the following assumptions:
\begin{itemize}
\item[(F1)] $a\in W^{1,\infty}(\R^+)$ is non-negative, 
\item[(F2)] for any $y\ge 0$,  $b(y,dx)$ is a measure such that 
\begin{itemize} 
\item[(i)] $b(y,dx)$ is  non-negative and  supported in $[0,y]$, and there exist a $C_b >0$ such  that $b(y,\R^+)\le C_b$ for all $y>0$, 

\item[(ii)] there exists $L_b$ such that for any $y,\bar y\ge 0$, 
\[ \|b(y,\cdot)-b(\bar y,\cdot)\|_{BL}\le L_b|y-\bar y|\]
\item[(iii)] for any $y\ge 0$, 
$$(b(y,dx),x) = \int_0^y x\,b(y,dx) = y. $$ 
\end{itemize}
\end{itemize}

The existence and uniqueness of mass conserving solutions of model \eqref{CoagModel} under these assumptions were established in \cite{AckLyonSaint2}.

\section{Numerical Method}\label{SecNumMeth}
We adopt the numerical discretization presented in \cite{AckLyonSaint2}.
For some fixed mesh sizes $\Delta x , \Delta t >0$, we discretize the size domain $[0,\xM]$ with the  cells  
\[\Lambda^{\Delta x}_j := [(j-\frac{1}{2})\Delta x, (j+\frac{1}{2})\Delta x),\text{ for } J= 1,\dots,J,\] \ and \[ \Lambda^{\Delta x}_0 := [0, \frac{\Delta x}{2}).\] We denote the midpoints of these grids by $x_j$.
The initial condition $\mu_0 \in \M^+(\R^+)$ will be approximated by a combination of Dirac measures
\[\mu_0^{\Delta x} = \sum_{j = 0}^J m_j^0 \delta_{x_j}, \text{ where } m_j^0 := \mu_0(\Lambda^{\Delta x}_j). \]

We first approximate the model coefficients  $\kappa$, $a$, $b$ as follow. For the physical ingredients, we define 
\[ a^{\Delta x}_i = \frac{1}{\Delta x}\int_{\Lambda^{\Delta x}_{i}} a(y)dy, \qquad 
\kappa^{\Delta x}_{i,j}=\frac{1}{\Delta x^2}\int_{\Lambda^{\Delta x}_{i}\times \Lambda^{\Delta x}_{j}} \kappa(x,y)dxdy \]
for $i,j\ge 1$, 
and
\[ a^{\Delta x}_0 = \frac{2}{\Delta x}\int_{\Lambda^{\Delta x}_0} a(y)dy, \qquad 
\kappa^{\Delta x}_{0,0}=\frac{4}{\Delta x^2}\int_{\Lambda^{\Delta x}_{0}\times \Lambda^{\Delta x}_{0}} \kappa(x,y)dxdy \]
(with the natural modifications for $\kappa^{\Delta x}_{0,j}$ and $\kappa^{\Delta x}_{i,0}$, $i\ge 1$). 
We then let $a^{\Delta x}\in W^{1,\infty}(\R^+)$ and $\kappa^{\Delta x}\in W^{1,\infty}(\R^+\times \R^+)$ be the linear interpolation of
the $a^{\Delta x}_i$ and $\kappa^{\Delta x}_{i,j}$ respectively. 
Finally, we define the measure $b^{\Delta x}(x_j,\cdot)\in \M^+(\Delta x\N)$ by 
\[b^{\Delta x}(x_j,\cdot) = \sum_{i\le j}  b(x_j,\Lambda^{\Delta x}_{i}) \delta_{x_j} =: \sum_{i\le j}b^{\Delta x}_{j,i}\delta_{x_j} \]
and then $b^{\Delta x}(x,\cdot)\in \M^+(\Delta x\N_0)$ for $x\ge 0$ as the linear interpolate between the $b^{\Delta x}(x_j,\cdot)$. When the context is clear, we omit the $\Delta x$ from the notation above.

We make use of these approximations to combine the high-resolution scheme presented in \cite{AckLyonSaint} with the fully explicit and semi-implicit schemes presented in \cite{AckLyonSaint3}. Together these schemes give us the numerical scheme

\begin{equation}\label{SO}
\left \lbrace
\begin{split}
m_j^{k+1} &= m_j^k -\frac{\Delta t}{\Delta x} (f_{j+\frac{1}{2}}^k - f_{j-\frac{1}{2}}^k) -\Delta t d_j^k m_j^k +\Delta t \left(\mathcal{C}_{j,k} + \mathcal{F}_{j,k} \right), \qquad j=1,..,J, \\
g_0^k m_0^{k} &= \Delta x \sum_{j=1}^{J}{}^{*} \beta_j^k m_j^k := \Delta x \left(\frac{3}{2}\beta_1^k m_1^k + \frac{1}{2}\beta_J^k m_J^k + \sum_{j=2}^{J-1} \beta_j^k m_j^k \right).
\end{split}\right. .
\end{equation}

Where the flux term is given by 
\begin{equation}\label{Def_HR_Flux}
f^k_{j+\frac{1}{2}} = \left \lbrace \begin{split}
&g_j^k m_j^k +\frac{1}{2}(g_{j+1}^k - g_{j}^k)m_j^k + \frac{1}{2} g_j^k  \; {\rm mm}(\Delta_+ m_j^k , \Delta_- m_j^k) & j = 2,3,\dots,J-2 \\
&g_j^k m_j^k & j = 0,1,J-1,J
\end{split}\right.  ,
\end{equation}
the fragmentation term, $\mathcal{F}_{j,k}$, is given by
\begin{equation}\label{Def_Fj}
\mathcal{F}_{j,k} := \sum_{i=j}^J b_{i,j}a_i m_i^k -a_jm_j^k ,
\end{equation} 
and the coagulation term, $\mathcal{C}_j$, is either given by an explicit discretization as 
\begin{equation}\label{Def_CjExp}
\mathcal{C}^{\text{exp}}_{j,k} := \frac{1}{2}\sum_{i=1}^{j-1} \kappa_{i,j-i}m_i^{k} m_{j-i}^k - \sumi \kappa_{i,j} m_i^k m_j^{k} , 
\end{equation}
or by an implicit one as
\begin{equation}\label{Def_CjImp}
\mathcal{C}^{\text{imp}}_{j,k} := \frac{1}{2}\sum_{i=1}^{j-1} \kappa_{i,j-i}m_i^{k+1} m_{j-i}^k - \sumi \kappa_{i,j} m_i^k m_j^{k+1} .
\end{equation}

As discussed in \cite{AckLyonSaint3}, the explicit and semi-implicit schemes behave differently with respect to the mass conservation and have different Courant–Friedrichs–Lewy (CFL) conditions. The assumed CFL condition for the schemes are 
\begin{equation}\label{CFL}
\begin{matrix}
\text{Explicit:} & \Delta t\Big(C_\kappa \|\mu_0 \|_{TV} \exp ((\zeta+  C_bC_a)T)  + C_a \max\{1,C_b\} + (1+\frac{3}{2\Delta x})\zeta\Big) \leq 1\\
\text{Semi-Implicit:} & \bar{\zeta} (2 + \frac{3}{2\Delta x})\Delta t \leq 1 ,
\end{matrix}
\end{equation}
where $\bar{\zeta}= \max\{ \zeta, \|a\|_{W^{1,\infty}}\}$, $C_a=\|a\|_\infty$. It is clear that the semi-implicit scheme has a less restrictive and simpler CFL condition than  the explicit scheme. In particular, the CFL condition of the semi-implicit scheme is independent on the initial condition unlike its counterpart. The trade off for this is a loss of qualitative behavior of the scheme in the sense of mass conservation. Indeed as shown in \cite{AckLyonSaint3}, when $\beta=d=g=0$, the semi-implicit coagulation term does not conserve mass whereas the explicit term does. 

It is useful to define the following coefficients:
\[A_j^k = \begin{cases}
g_j^k &j=1,J, \\
\frac{1}{2}\left( g_{j+1}^k +g_j^k +g_j^k \frac{  \; {\rm mm}(\Delta_+ m_j^k , \Delta_- m_j^k)}{\Delta_- m_j^k} \right) & j=2, \\
\frac{1}{2}\left(g_{j+1}^k +g_j^k +g_j^k \frac{  \; {\rm mm}(\Delta_+ m_j^k , \Delta_- m_j^k)}{\Delta_- m_j^k} -g_{j-1}^k \frac{  \; {\rm mm}(\Delta_- m_j^k , \Delta_- m_{j-1}^k)}{\Delta_- m_j^k} \right) &j=3,\dots,J-2, \\
\frac{1}{2}\left(2g_{j}^k  -g_{j-1}^k \frac{  \; {\rm mm}(\Delta_- m_j^k , \Delta_- m_{j-1}^k)}{\Delta_- m_j^k}\right)  & j=J-1,
\end{cases} \] 
and
\[B_j^k = \begin{cases}
\Delta_- g_j^k & j=1,J, \\
\frac{1}{2} \Delta_+ g_j^k & j=2, \\
\frac{1}{2}(\Delta_+ g_j^k + \Delta_-g_j^k) & j=3,\dots,J-2, \\
\frac{1}{2} \Delta_- g_j^k  & j=J-1.
\end{cases}.\]
Notice, $|A_j^k| \leq \frac{3\Delta t}{2\Delta x}\zeta$ and $A_j^k -B_j^k \geq 0$ as
\[ 2(A_j^k - B_j^k) = \begin{cases} 
2g_{j-1}^k & j=1,J, \\
g_j^k \left( 2+\frac{ \; {\rm mm}(\Delta_+ m_j^k , \Delta_- m_j^k)}{\Delta_- m_j^k}\right) & j=2,\\
g_j^k \left( 1+\frac{ \; {\rm mm}(\Delta_+ m_j^k , \Delta_- m_j^k)}{\Delta_- m_j^k}\right) + g_{j-1}^k \left( 1-\frac{ \; {\rm mm}(\Delta_- m_j^k , \Delta_- m_{j-1}^k)}{\Delta_- m_j^k} \right) & j=3,\dots ,J-2, \\
g_j^n + g_{j-1}^n \left(1-\frac{ \; {\rm mm}(\Delta_- m_j^n,\Delta_- m_{j-1}^n)}{\Delta_- m_j^n}  \right) &j=J-1.
\end{cases}.   \]
Scheme (\ref{SO}) can then be rewritten as 
\begin{equation}\label{SO2}
\left \lbrace
\begin{split}
m_j^{k+1} &= (1-\frac{\Delta t}{\Delta x}A_j^k - \Delta t (d_j^k + a_j))m_j^k + \frac{\Delta t}{\Delta x}(A_j^k - B_j^k)m_{j-1}^k \\ 
& \qquad + \Delta t \sum_{i=j}^J b_{i,j}a_i m_i^k +\Delta t. \mathcal{C}_{j,k}\\
g_0^k m_0^{k} &= \Delta x \sum_{j=1}^{J}{}^* \beta_j^k m_j^k  \; .
\end{split}. \right.
\end{equation}
Depending on the choice of coagulation term, this formulation leads to either

\begin{equation}\label{SO2Exp}
\left \lbrace
\begin{split}
m_j^{k+1} &= (1-\frac{\Delta t}{\Delta x}A_j^k - \Delta t (d_j^k + a_j)- \Delta t \sum_{i=1}^J \kappa_{i,j}m_i^k )m_j^k + \frac{\Delta t}{\Delta x}(A_j^k - B_j^k)m_{j-1}^k \\ 
& \qquad + \Delta t \sum_{i=j}^J b_{i,j}a_i m_i^k +\frac{\Delta t}{2} \sum_{i=1}^{j-1} \kappa_{i,j-i} m_i^k m_{j-i}^k\\
g_0^k m_0^{k} &= \Delta x \sum_{j=1}^{J}{}^* \beta_j^k m_j^k  \; 
\end{split}, \right.
\end{equation} 
for the explicit term, $\mathcal{C}^{\text{exp}}_{j,k}$ or 
\begin{equation}\label{SO2Imp}
\left \lbrace
\begin{split}
(1+ \Delta t \sumi \kappa_{i,j}m_i^k)m_j^{k+1} &= (1-\frac{\Delta t}{\Delta x}A_j^k - \Delta t (d_j^k + a_j))m_j^k + \frac{\Delta t}{\Delta x}(A_j^k - B_j^k)m_{j-1}^k \\ 
& \qquad + \Delta t \sum_{i=j}^J b_{i,j}a_i m_i^k +\frac{\Delta t}{2} \sum_{i=1}^{j-1} \kappa_{i,j-i} m_i^{k+1} m_{j-i}^k\\
g_0^k m_0^{k} &= \Delta x \sum_{j=1}^{J}{}^* \beta_j^k m_j^k  \; ,
\end{split}. \right.
\end{equation}
for the implicit term, $\mathcal{C}^{\text{imp}}_{j,k}$.

For these, schemes, we have the following Lemmas which are proven in the appendix:
\begin{lem}\label{ThrmTVBdPos}
For each $k = 1,2,\dots,\bar{k}$, 
\begin{itemize}
\item $m_j^k \geq 0$ for all $j = 1,2,\dots J$,
\item $\| \mu^k_{\Delta x}\|_{TV} \leq \|\mu_0 \|_{TV} \exp((\zeta + C_b C_a)T).$
\end{itemize}
\end{lem}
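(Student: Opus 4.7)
The plan is to prove both assertions by simultaneous induction on $k$, handling the explicit and semi-implicit cases in parallel. At $k=0$, non-negativity is immediate from $m_j^0 = \mu_0(\Lambda^{\Delta x}_j) \geq 0$, and $\|\mu^0_{\Delta x}\|_{TV} = \mu_0([0,\xM+\Delta x/2)) \leq \|\mu_0\|_{TV}$ already lies below the claimed exponential bound.

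\textbf{Inductive step (non-negativity).} Working from the reformulations \eqref{SO2Exp} and \eqref{SO2Imp}, I verify that every coefficient multiplying an $m^k$ or $m^{k+1}$ value on the right is non-negative. The gain coefficients $A_j^k - B_j^k$, $b_{i,j}a_i$, and $\kappa_{i,j-i}$ are handled by the identity immediately following the definition of $B_j^k$ together with (K1) and (F1)--(F2). The delicate term is the diagonal coefficient of $m_j^k$ (resp.\ $m_j^{k+1}$). Using $|A_j^k| \leq \tfrac{3}{2}\zeta$, $\|d\|_\infty,\|\beta\|_\infty\leq\zeta$ from (A2), $a_j\leq C_a$ from (F1), $\sum_i \kappa_{i,j}m_i^k \leq C_\kappa \|\mu^k_{\Delta x}\|_{TV}$, and the inductive TV bound, the CFL conditions in \eqref{CFL} yield non-negativity of this coefficient in both cases. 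For \eqref{SO2Imp} the right-hand side contains $m_i^{k+1}$ only for $i<j$, so the update is a triangular system, solved by a secondary induction on $j$ in which the LHS factor $1+\Delta t\sum_i \kappa_{i,j}m_i^k \geq 1$ ensures both well-posedness and positivity.

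\textbf{Inductive step (TV bound).} I sum the update over $j=1,\dots,J$ and exploit the non-negativity just established. The flux differences telescope: by (A3) the outflow at $j=J+\tfrac12$ vanishes, while the inflow at $j=\tfrac12$ equals $g_0^k m_0^k = \Delta x\sum_{j=1}^{J}{}^{*}\beta_j^k m_j^k$ through the boundary condition, contributing at most $\tfrac{3}{2}\zeta\,\Delta t\,\|\mu^k_{\Delta x}\|_{TV}$. A change of indices $(i,l)=(i,j-i)$, together with the symmetry in (K1) and the support constraint (K2), reveals $\sum_j \mathcal{C}^{\mathrm{exp}}_{j,k} = -\tfrac12 \sum_{i,l\geq 1}\kappa_{i,l}m_i^k m_l^k \leq 0$; the analogous rearrangement for the semi-implicit case also gives $\sum_j \mathcal{C}^{\mathrm{imp}}_{j,k} \leq 0$ once the loss has been absorbed into the LHS factor. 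Fubini combined with (F2)(i) yields $\sum_j \mathcal{F}_{j,k} \leq (C_b-1)C_a\|\mu^k_{\Delta x}\|_{TV} \leq C_b C_a\|\mu^k_{\Delta x}\|_{TV}$, and the death contribution is non-positive. Incorporating $m_0^{k+1}$ via the boundary equation contributes only an $O(\Delta x)$ correction, so the overall estimate reads $\|\mu^{k+1}_{\Delta x}\|_{TV} \leq (1 + \Delta t(\zeta + C_b C_a))\|\mu^{k}_{\Delta x}\|_{TV}$. Iterating at most $\bar k\leq T/\Delta t$ times and using $(1+\Delta t\,c)^{\bar k}\leq e^{cT}$ closes the induction.

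\textbf{Main obstacle.} The semi-implicit coagulation term $\mathcal{C}^{\mathrm{imp}}_{j,k}$ couples different indices at time $k+1$ and does not telescope as cleanly as its explicit counterpart; the task is to pair, index by index, the loss contribution (which sits inside the factor $1+\Delta t\sum_i\kappa_{i,j}m_i^k$ on the LHS) with the gain $\tfrac12\kappa_{i,j-i}m_i^{k+1}m_{j-i}^k$, exploiting the symmetry of $\kappa$ to conclude that the net effect on the total mass is non-positive. A related subtlety is that the ghost cell $m_0^{k+1}$ is not updated by the main recursion but defined algebraically through the discrete boundary condition, so its contribution to the total variation must be routed through that equation and shown to be absorbable into the Gronwall constant without enlarging it.
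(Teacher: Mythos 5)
Your proposal is correct and follows essentially the same route as the paper: simultaneous induction on $k$, with non-negativity obtained from the sign structure of \eqref{SO2Exp}/\eqref{SO2Imp} plus the CFL condition and the inductive TV bound, and the TV estimate obtained by summing over $j$, telescoping the flux, using the symmetry of $\kappa$ to make the coagulation contribution non-positive, and bounding the fragmentation gain by $C_bC_a\|\mu^k_{\Delta x}\|_{TV}$ before iterating the factor $(1+c\Delta t)$. You in fact supply more detail than the paper (which defers the summation estimates and the semi-implicit case to the earlier reference); the only blemish is a small internal inconsistency where your boundary inflow estimate $\tfrac32\zeta\Delta t\|\mu^k_{\Delta x}\|_{TV}$ does not quite match the factor $\zeta$ you then carry into the Gronwall constant.
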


\begin{lem}\label{ThrmLip}
For any $l,p = 1,2,\dots, \bar{k}$,
\[\|\mu_{\Delta x}^l - \mu_{\Delta x}^p \|_{BL} \leq \mathcal{L}_T |l-p|. \]
\end{lem}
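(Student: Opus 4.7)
The plan is to reduce the estimate to a one-step bound and iterate via the triangle inequality: if $\|\mu_{\Delta x}^{k+1}-\mu_{\Delta x}^k\|_{BL}\le \mathcal L_T$ (with $\mathcal L_T$ absorbing the $\Delta t$ factor) uniformly in $k$, then $\|\mu_{\Delta x}^l-\mu_{\Delta x}^p\|_{BL}\le \mathcal L_T|l-p|$ follows at once. To prove the one-step bound I would fix $\phi\in W^{1,\infty}(\R^+)$ with $\|\phi\|_{W^{1,\infty}}\le 1$ and expand
\[(\mu_{\Delta x}^{k+1}-\mu_{\Delta x}^k,\phi)=\sum_{j=0}^J(m_j^{k+1}-m_j^k)\phi(x_j),\]
substituting the update \eqref{SO2} for $j\ge 1$ and the boundary relation $g_0^km_0^k=\Delta x\sum^*\beta_j^km_j^k$ for $j=0$.

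This decomposes the right-hand side into five natural groups. The flux contribution is handled by discrete summation by parts, using $f_{J+1/2}^k=g_J^km_J^k=0$ from (A3) to discard the upper boundary and the Lipschitz bound $|\phi(x_{j+1})-\phi(x_j)|\le \Delta x$ to convert the $\frac{\Delta t}{\Delta x}$ prefactor into $\Delta t$ times a bounded sum. The death and coagulation-loss terms are bounded pointwise by $\Delta t\zeta\|\mu_{\Delta x}^k\|_{TV}$ and $\Delta t C_\kappa\|\mu_{\Delta x}^k\|_{TV}^2$ respectively, using (A2), (K1), and the uniform TV bound from Lemma \ref{ThrmTVBdPos}. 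The fragmentation loss and gain terms are bounded using (F1), (F2)(i), and the same TV bound, after rewriting the double sum as $\Delta t\sum_i a_im_i^k(b^{\Delta x}(x_i,\cdot),\phi)$. The $j=0$ piece $(m_0^{k+1}-m_0^k)\phi(x_0)$ is combined with the $\frac{\Delta t}{\Delta x}f_{1/2}^k\phi(x_1)$ leftover from the flux: since $f_{1/2}^k=g_0^km_0^k=\Delta x\sigma^k$ with $\sigma^k=\sum^*\beta_j^km_j^k$, and $m_0^{k+1}$ satisfies the analogous relation at time $k+1$, the Lipschitz continuity of $\phi$ together with (A1)-(A2) turns this combined boundary contribution into an $O(\Delta t)$ expression controlled by $\zeta$ and the TV bound.

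The main obstacle is the semi-implicit coagulation term in \eqref{SO2Imp}, where $m_j^{k+1}$ appears on the right-hand side and threatens a circular dependence on the quantity we are trying to bound. I would avoid this circularity by invoking the uniform TV bound $\|\mu_{\Delta x}^{k+1}\|_{TV}\le\|\mu_0\|_{TV}\exp((\zeta+C_bC_a)T)$ from Lemma \ref{ThrmTVBdPos} directly, so that the coagulation contribution is estimated by $\Delta t\,C_\kappa R^2$ with $R$ independent of $k$, rather than attempting to re-express $m^{k+1}$ in terms of $m^k$ through the implicit system. Collecting all contributions and taking the supremum over admissible $\phi$ produces the single-step estimate $\|\mu_{\Delta x}^{k+1}-\mu_{\Delta x}^k\|_{BL}\le C\Delta t$ for a constant depending only on $T$, the model parameters, and $\|\mu_0\|_{TV}$; absorbing $\Delta t$ into $\mathcal L_T$ and telescoping across $|l-p|$ steps concludes the proof.
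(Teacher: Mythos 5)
Your proposal follows essentially the same route as the paper's proof: fix $\phi$ with $\|\phi\|_{W^{1,\infty}}\le 1$, substitute the scheme into $(\mu_{\Delta x}^{k+1}-\mu_{\Delta x}^k,\phi)$, sum the flux by parts using $g_J^k=0$ and the boundary relation, bound the remaining terms by the uniform TV bound of Lemma \ref{ThrmTVBdPos}, and telescope the resulting one-step estimate $\|\mu_{\Delta x}^{k+1}-\mu_{\Delta x}^k\|_{BL}\le L\Delta t$. Your explicit treatment of the semi-implicit coagulation term (bounding $m^{k+1}$ directly via the TV estimate rather than unwinding the implicit system) is exactly the kind of argument the paper defers to its earlier reference, so the two proofs coincide in substance.
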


Using the above two Lemmas, we can arrive at analogous  results for the linear interpolation \eqref{LinInter}:
\begin{equation}\label{LinInter}
\mu_{\Delta x}^{\Delta t}(t):= \mu_{\Delta x}^0 \chi_{\{0\}}(t) + \sum_{k=0}^{\bar{k}-1} \left[ (1- \frac{t-k\Delta t}{\Delta t})\mu^k_{\Delta x} + \frac{t-k\Delta t}{\Delta t} \mu^{k+1}_{\Delta x} \right] \chi_{(k\Delta t, (k+1)\Delta t]}(t). 
\end{equation}
Thus by the well know Ascoli-Arzela Theorem, we have the existence of a convergent subsequence of the net $\{\mu_{\Delta x}^{\Delta t}(t) \}$ in $C([0,T],\M^+([0,\xM])$. We now need only show any convergent subsequence converges to the unique solution \eqref{Def_Solution}. 

\begin{thm}\label{ThmConvergence}
As $\Delta x,\Delta t\to 0$ the sequence $\mu_{\Delta x}^{\Delta t}$ converges  in $C([0,T],\mathcal{M^+}([0,\xM]))$ to the solution of \eqref{CoagModel}.
\end{thm}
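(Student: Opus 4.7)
The plan is to apply Ascoli--Arzelà to the family of interpolants \eqref{LinInter}. By Lemma \ref{ThrmTVBdPos} the sequence is pointwise relatively compact in $\M^+([0,\xM])$ (equipped with the BL topology), and by Lemma \ref{ThrmLip} the interpolants are equi-Lipschitz in time, so along a subsequence they converge in $C([0,T],\M^+([0,\xM]))$ to some limit $\mu$. Since weak solutions of \eqref{CoagModel} are unique by \cite{AckLyonSaint2}, it suffices to show that every such cluster point satisfies \eqref{Def_Solution}; the full net then converges.

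To establish this, I would derive a discrete analog of the weak formulation. Fix $\phi\in (C^1\cap W^{1,\infty})([0,T]\times \R^+)$, set $\phi_j^k:=\phi(k\Delta t,x_j)$, multiply \eqref{SO2} by $\phi_j^{k+1}$, and sum over $j=1,\dots,J$ and $k=0,\dots,\bar k-1$. Telescoping in $k$ produces the time-derivative term via $(\phi_j^{k+1}-\phi_j^k)/\Delta t\to \p_t\phi$; summation by parts in $j$ on the flux increments $f^k_{j+1/2}-f^k_{j-1/2}$ produces the convective term $g(s,\mu_s)\p_x\phi$; and a reindexing $i\mapsto j-i$ in the coagulation double sum recovers the symmetric bilinear form of $K[\mu]$. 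The fragmentation sum is rewritten analogously, and the boundary relation $g_0^k m_0^{k}=\Delta x\sum_j{}^*\beta_j^k m_j^k$ plays the role of $\int \phi(s,0)\beta(s,\mu_s)\,d\mu_s$. This yields an identity matching \eqref{Def_Solution} up to a residual $R_{\Delta x,\Delta t}(\phi)$ that collects Taylor remainders, the minmod correction from \eqref{Def_HR_Flux}, and the first-order boundary cells $j\in\{0,1,J-1,J\}$.

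The passage to the limit in each integrated term rests on the BL-convergence $\mu^{\Delta t}_{\Delta x}\to \mu$ together with the uniform TV bound from Lemma \ref{ThrmTVBdPos}. Assumptions (A1)--(A2) then ensure that $g(s,\mu^{\Delta t}_{\Delta x}(s))$, $d(s,\mu^{\Delta t}_{\Delta x}(s))$ and $\beta(s,\mu^{\Delta t}_{\Delta x}(s))$ converge uniformly to their continuous counterparts. The bilinear coagulation sum is handled by pairing $\mu^{\Delta t}_{\Delta x}\otimes \mu^{\Delta t}_{\Delta x}$ against $\frac12\kappa^{\Delta x}(x,y)[\phi(s,x+y)-\phi(s,x)-\phi(s,y)]$, using that tensor products of BL-convergent measures with uniformly bounded TV mass converge in BL and that $\kappa^{\Delta x}\to\kappa$ in $W^{1,\infty}$ by construction. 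The fragmentation sum converges via (F2)(ii), which yields $b^{\Delta x}(x_j,\cdot)\to b(y,\cdot)$ in BL uniformly in $y$, combined with the bounded Lipschitz regularity of $a^{\Delta x}$ from (F1).

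The main obstacle is showing $R_{\Delta x,\Delta t}(\phi)\to 0$, and the delicate piece is the minmod contribution in \eqref{Def_HR_Flux}. Since $|{\rm mm}(\Delta_+m_j^k,\Delta_-m_j^k)|\le \min(|\Delta_+ m_j^k|,|\Delta_- m_j^k|)$ and the telescoping sum $\sum_j |\Delta_\pm m_j^k|$ is dominated by $2\|\mu^k_{\Delta x}\|_{TV}$, the minmod term contributes at most $C\,\Delta x\,\|\p_x\phi\|_\infty\|\mu^k_{\Delta x}\|_{TV}$, which is $O(\Delta x)$ by Lemma \ref{ThrmTVBdPos}. The low-order boundary cells contribute at the same order, while the finite-difference Taylor remainders for $\p_t\phi$ and $\p_x\phi$ are $O(\Delta t+\Delta x)$ by smoothness of $\phi$. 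Combining these estimates shows $R_{\Delta x,\Delta t}(\phi)\to 0$, so the cluster point $\mu$ satisfies \eqref{Def_Solution} and the theorem follows.
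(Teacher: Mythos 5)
Your overall strategy coincides with the paper's: extract a convergent subsequence via Ascoli--Arzel\`a from Lemmas \ref{ThrmTVBdPos} and \ref{ThrmLip}, pass the discrete weak formulation obtained by testing \eqref{SO} against $\phi_j^k$ to the limit, and invoke uniqueness of the weak solution to upgrade subsequential convergence to convergence of the whole net. The paper, however, delegates the two substantive computations (the transport/boundary side to \cite{AckLyonSaint}, the coagulation--fragmentation side to \cite{AckLyonSaint3}) rather than redoing them, whereas you attempt to carry them out directly --- and it is exactly there that your argument has a gap.

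The problem is your estimate of the minmod residual. After Abel summation the minmod part of the flux contributes
\[
-\sum_{k}\frac{\Delta t}{\Delta x}\sum_{j}\tfrac12\, g_j^k\,{\rm mm}(\Delta_+m_j^k,\Delta_-m_j^k)\,\bigl(\phi_{j+1}^k-\phi_j^k\bigr).
\]
Bounding $|\phi_{j+1}^k-\phi_j^k|\le \Delta x\,\|\p_x\phi\|_\infty$ cancels the $1/\Delta x$ prefactor, and the bound $\sum_j|{\rm mm}(\Delta_+m_j^k,\Delta_-m_j^k)|\le 2\|\mu^k_{\Delta x}\|_{TV}$ then yields $O(\Delta t)$ \emph{per time step}, hence $O(1)$ --- not $o(1)$ --- after summing over the $\bar k\sim T/\Delta t$ steps. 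Your claimed bound $C\,\Delta x\,\|\p_x\phi\|_\infty\|\mu^k_{\Delta x}\|_{TV}$ appears to have dropped the $\Delta t/\Delta x$ weight in \eqref{SumUp2}. A genuinely finer argument is needed here: the raw TV bound cannot work, because for singular data (e.g.\ Dirac masses, which are the whole point of the measure-valued setting) $\sum_j|\Delta_\pm m_j^k|$ does not vanish as $\Delta x\to 0$; one must exploit the structure of the limiter itself (it vanishes at local extrema, and is controlled by the \emph{smaller} of the two increments), which is the content of the result cited from \cite{AckLyonSaint}. Two further minor points: the paper defines ${\rm mm}(a,b)$ with $\max(|a|,|b|)$, so your inequality $|{\rm mm}(a,b)|\le\min(|a|,|b|)$ is false for the definition as written (presumably a typo in the paper, but worth flagging rather than silently assuming); and the paper first proves \eqref{Def_Solution} for compactly supported $C^2$ test functions and then uses a density argument to reach $(C^1\cap W^{1,\infty})$, a step your outline omits. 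The remainder of your limit passage (uniform convergence of $g,d,\beta$ via (A1)--(A2), BL convergence of tensor products for the coagulation term, (F2)(ii) for fragmentation) is consistent with what the paper does.
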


\begin{proof}
By multiplying \eqref{SO} by a superfluously smooth test function $\phi \in (W^{1,\infty} \cap C^2)([0,T]\times \R)$, denoting $\phi_j^k := \phi (k\Delta t,x_j)$, summing over all $j$ and $k$, and rearranging we arrive at
\begin{align}
\sum_{k=0}^{\bar{k}-1} \sum_{j =1}^{J} \left((m_j^{k+1}-m_j^k)\phi_j^k + \frac{\Delta t}{\Delta x} (f_{j+\frac{1}{2}}^k - f_{j-\frac{1}{2}}^k)\phi_j^k \right) 
+\Delta t \sum_{k=0}^{\bar{k}-1} \sum_{j =1}^{\infty} d_j^k m_j^k \phi_j^k  \label{SumUp2}  \\ 
= \Delta t \sum_{k=1}^{\bar{k}-1}\sum_{j=1}^J  \phi_j^k \left( \frac{1}{2}\sum_{i=1}^{j-1} \kappa_{i,j-i}m_i^{k} m_{j-i}^k - \sumi \kappa_{i,j} m_i^k m_j^{k} + \sum_{i=j}^J b_{i,j}a_i m_i^k -a_jm_j^k\right). \notag
\end{align}   

The left-hand side of equation \eqref{SumUp2} was shown in \cite{AckLyonSaint} to be equivalent to
\begin{equation*}
\begin{split}
& \int_{0}^{\xM} \phi(T,x)d\mu^{\bar{k}}_{\Delta x}(x) 
- \int_{0}^{\xM} \phi(0,x)d\mu^0_{\Delta x}(x) \\ 
& - \Delta t\sum_{k=0}^{\bar{k}-1} \left( \int_{0}^{\xM} \partial_t\phi(t_k,x)d\mu^k_{\Delta x}(x) 
+ \int_{0}^{\xM} \partial_x\phi(t_k,x)g(t_k,\mu^k_{\Delta x})(x) d\mu^k_{\Delta x}(x) \right. \\
& \left. \hspace{0.5cm} -\int_{\R^+} d(t_k,\mu_{\Delta x}^k)(x) \phi(t_k,x)d\mu_{\Delta x}^k(x) 
	+\int_{0}^{\xM} \phi(t_k,\Delta x)\beta(t_k,\mu^k_{\Delta x})(x) d\mu_{\Delta x}^k(x)\right) + o(1),  
\end{split}
\end{equation*}
where $o(1) \longrightarrow 0$ as $\Delta t, \Delta x \longrightarrow 0$.

The right-hand side of \eqref{SumUp2} was shown in \cite{AckLyonSaint3} to be equal to 
\[\Delta t \sum_{k=1}^{\bar{k}-1} \Big\{ (K[\mu_{\Delta x}^{\Delta t}(t_k)],\phi(t_k,\cdot))+(F[\mu_{\Delta x}^{\Delta t}(t_k)], \phi(t_k,\cdot))\Big\} + O(\Delta x).\]

Making use of results, it is then easy to see \eqref{SumUp2} is equivalent to
\begin{align*}
& \int_{0}^{\xM} \phi(T,x )d\mu^{\Delta t}_{\Delta x}(T)(x) 
-\int_{0}^{\xM} \phi(0,x)d\mu^0_{\Delta x}(x) \\ 
& = 
\int_0^T \left( \int_{0}^{\xM} \partial_t\phi(t,x) 
+ \partial_x\phi(t,x) g(t,\mu_{\Delta x}^{\Delta t} (t))(x) d\mu^{\Delta t}_{\Delta x}(t)(x) \right. \\  
& \left. \hspace{0.5cm} -\int_{0}^{\xM} d(t,\mu_{\Delta x}^{\Delta t} (t))(x) \phi(t,x)d\mu^{\Delta t}_{\Delta x}(t)(x) 
+\int_{0}^{\xM} \phi(t,\Delta x)\beta(t,\mu_{\Delta x}^{\Delta t} (t))(x) d\mu^{\Delta t}_{\Delta x}(t)(x) \right) dt\\
&\quad +\int_0^T (K[\mu_{\Delta x}^{\Delta t}(t)],\phi(t,\cdot)) + (F[\mu_{\Delta x}^{\Delta t}(t)], \phi(t,\cdot)) \,dt + o(1). 
\end{align*}

Passing the limit as $\Delta t, \Delta x \longrightarrow 0$ along a converging subsequence, 
we then obtain that equation \eqref{Def_Solution} 
holds for any  
$\phi \in (C^2\cap W^{1,\infty})([0,T]\times\R^+)$ with compact support. 
A standard density argument  shows that  
equation \eqref{Def_Solution} holds for any  
$\phi \in (C^1\cap W^{1,\infty})([0,T]\times\R^+)$. As the weak solution is unique \cite{AckLyonSaint2}, we conclude the net $\{\mu_{\Delta x}^{\Delta t}\}$ converges to the solution of model \eqref{CoagModel}.
\end{proof}

We point out that while these schemes are higher-order in space, they are only first order in time. To lift these schemes into a second-order in time as well, we make use of the second-order Runge-Kutta time discretization \cite{Shu} for the explicit scheme and second-order Richardson extrapolation \cite{Richardson} for the semi-implicit scheme. 


\section{Numerical Examples}\label{Sec_Numerics}

In this section, we provide numerical simulations which test the order of the explicit and semi-implicit schemes developed in the previous sections.
For each example, we give the BL error and the order of convergence. 
To appreciate the gain in the order of convergence compared to those studied in \cite{AckLyonSaint3} which are based on first order approximation of the transport term, 
we add some of the numerical results from the scheme presented in \cite{AckLyonSaint3}. 

In some of the following examples, the exact solution of the model problem is given. In these cases, we approximate the order of accuracy, $q$, with the standard calculation: \[q=\log_2\left(\dfrac{\rho (\mu^{\Delta t}_{\Delta x}(T),\mu(T))}{\rho (\mu^{0.5\Delta t}_{0.5\Delta x}(T),\mu(T))} \right)  \] where $\mu$ represents the exact solution of the examples considered. In the cases where the exact solutions are unknown, we approximate the order by 
\[q=\log_2\left(\dfrac{\rho (\mu^{\Delta t}_{\Delta x}(T),\mu^{2\Delta t}_{2\Delta x}(T))}{\rho (\mu^{0.5\Delta t}_{0.5\Delta x}(T),\mu^{\Delta t}_{\Delta x}(T))} \right)  \]
and we report the numerator of the log argument as the error.
The metric $\rho$ we use  here was introduced in \cite{Jab} and is equivalent to the BL metric, namely 
\[ C\rho(\mu,\nu) \leq \|\mu -\nu \|_{BL} \leq \rho(\mu,\nu)\] 
for some constant $C$ (dependent on the finite domain). 
As discussed in \cite{Jab}, this metric is more efficient to compute than the BL norm and maintains the same order of convergence. 
An alternative to this algorithm would be to make use of the algorithms presented in \cite{HilleTheewis} where convergence in the Fortet-Mourier distance is considered.

\paragraph{Example 1} In this example, we test the quality of the finite difference schemes against coagulation equations. To this end, we take $\kappa (x,y) \equiv 1$ and $\mu_0 = e^{-x} dx$ with all other ingredients set to $0$. This example has exact solution given by \[ \mu_t = \left(\frac{2}{2+t}\right)^2 \exp\left(-\frac{2}{2+t}x \right)dx\] see \cite{KeckBortz} for more details. The simulation is performed over the finite domain $(x,t) \in [0,20]\times [0,0.5]$. We present the BL error and the numerical order of convergence for both schemes in Table \ref{TableCoagOnly}.

\begin{table}[h]
\center \begin{tabular}{|cc|ccc|ccc|}
\hline
\multicolumn{2}{|c}{}                          & \multicolumn{2}{c|}{\textbf{Explicit}}                                                              & \multicolumn{2}{c|}{\textbf{Semi-Implicit}}                                                         \\ \hline
\multicolumn{1}{|c|}{\textbf{Nx}} & \textbf{Nt} & \multicolumn{1}{c|}{\textbf{BL Error}} & \multicolumn{1}{c|}{\textbf{Order}} & \multicolumn{1}{c|}{\textbf{BL Error}} & \multicolumn{1}{c|}{\textbf{Order}}  \\ \hline
\multicolumn{1}{|c|}{100}         & 250         & \multicolumn{1}{c|}{0.0020733}         & \multicolumn{1}{c|}{}                    & \multicolumn{1}{c|}{0.0020886}         & \multicolumn{1}{c|}{}           \\ \hline
\multicolumn{1}{|c|}{200}         & 500         & \multicolumn{1}{c|}{0.00054068}        & \multicolumn{1}{c|}{1.9391}                     & \multicolumn{1}{c|}{0.00054408}        & \multicolumn{1}{c|}{1.9407}                \\ \hline
\multicolumn{1}{|c|}{400}         & 1000        & \multicolumn{1}{c|}{0.00013802}        & \multicolumn{1}{c|}{1.9699}                 & \multicolumn{1}{c|}{0.00013883}        & \multicolumn{1}{c|}{1.9705}                      \\ \hline
\multicolumn{1}{|c|}{800}         & 2000        & \multicolumn{1}{c|}{3.4842e-05}        & \multicolumn{1}{c|}{1.9860}                     & \multicolumn{1}{c|}{3.5040e-05}         & \multicolumn{1}{c|}{1.9862}                      \\ \hline
\multicolumn{1}{|c|}{1600}        & 4000        & \multicolumn{1}{c|}{8.7417e-06}        & \multicolumn{1}{c|}{1.9948}                       & \multicolumn{1}{c|}{8.7906e-06}        & \multicolumn{1}{c|}{1.9950}                 \\ \hline
\multicolumn{2}{|c|}{}                          & \multicolumn{2}{c|}{\textbf{Explicit (1st order)}}                                                              & \multicolumn{2}{c|}{\textbf{Semi-Implicit (1st order)}}                                                         \\ \hline
\multicolumn{1}{|c|}{800}        & 2000        & \multicolumn{1}{c|}{0.015675}        & \multicolumn{1}{c|}{0.96974}         & \multicolumn{1}{c|}{0.010996}        & \multicolumn{1}{c|}{0.97418}                        \\ \hline
\end{tabular}
\caption{\label{TableCoagOnly} Error and order of convergence for example 1. Here Nx and Nt represent the number of points in $x$ and $t$, respectively.
The numerical result in the last row for the 1st order variant is generated from the scheme presented in \cite{AckLyonSaint3}.}
\end{table}

\paragraph{Example 2} In this example, we test the quality of the finite difference scheme against fragmentation only equations. We point out that in this case, the two schemes are identical in the spacial component. For this demonstration, we take $\mu_0 = e^{-x}dx$, $ b(y,\cdot) = \frac{2}{y}dx \text{ and } a(x) = x$. As given in \cite{SingSahaKum}, this problem has an exact solution of 
\[\mu_t = (1+t)^2\exp(-x(1+t))dx .\]
The simulation is performed over the finite domain $(x,t) \in [0,20]\times [0,0.5]$.
We present the BL error and the numerical order of convergence for both schemes in Table \ref{TableFragOnly}. Note as compared to coagulation, the fragmentation process is more affected by the truncation of the domain. This results in the numerical order of the scheme being further from 2 than example 1. 

\begin{table}[h]
\center 
\begin{tabular}{|cc|ccc|ccc|}
\hline
\multicolumn{2}{|c}{}                          & \multicolumn{2}{c|}{\textbf{Explicit}}                                                              & \multicolumn{2}{c|}{\textbf{Semi-Implicit}}                                                         \\ \hline
\multicolumn{1}{|c|}{\textbf{Nx}} & \textbf{Nt} & \multicolumn{1}{c|}{\textbf{BL Error}} & \multicolumn{1}{c|}{\textbf{Order}}  & \multicolumn{1}{c|}{\textbf{BL Error}} & \multicolumn{1}{c|}{\textbf{Order}} \\ \hline
\multicolumn{1}{|c|}{100}         & 250         & \multicolumn{1}{c|}{0.0053857}         & \multicolumn{1}{c|}{}                    & \multicolumn{1}{c|}{0.0053836}         & \multicolumn{1}{c|}{}                   \\ \hline
\multicolumn{1}{|c|}{200}         & 500         & \multicolumn{1}{c|}{0.0014548}         & \multicolumn{1}{c|}{1.8883}                    & \multicolumn{1}{c|}{0.0014536}         & \multicolumn{1}{c|}{1.8890}                     \\ \hline
\multicolumn{1}{|c|}{400}         & 1000        & \multicolumn{1}{c|}{0.00037786}        & \multicolumn{1}{c|}{1.9449}                      & \multicolumn{1}{c|}{0.00037753}        & \multicolumn{1}{c|}{1.9449}                     \\ \hline
\multicolumn{1}{|c|}{800}         & 2000        & \multicolumn{1}{c|}{9.6317e-05}        & \multicolumn{1}{c|}{1.9720}                    & \multicolumn{1}{c|}{9.6322e-05}        & \multicolumn{1}{c|}{1.9707}                  \\ \hline
\multicolumn{1}{|c|}{1600}        & 4000        & \multicolumn{1}{c|}{2.4468e-05}        & \multicolumn{1}{c|}{1.9769}               & \multicolumn{1}{c|}{2.4514e-05}        & \multicolumn{1}{c|}{1.9743}                  \\ \hline
\multicolumn{2}{|c}{}                          & \multicolumn{2}{c|}{\textbf{Explicit (1st order)}}                                                              & \multicolumn{2}{c|}{\textbf{Semi-Implicit (1st order)}}                                                         \\ \hline
\multicolumn{1}{|c|}{800}        & 2000        & \multicolumn{1}{c|}{0.059804}        & \multicolumn{1}{c|}{0.9128}                      & \multicolumn{1}{c|}{0.096943}        & \multicolumn{1}{c|}{0.86667}                     \\ \hline
\end{tabular}
\caption{\label{TableFragOnly}Error and order of convergence for example 2. Here Nx and Nt represent the number of points in $x$ and $t$, respectively.
The numerical result in the last row for the 1st order variant is generated from the scheme presented in \cite{AckLyonSaint3}.}
\end{table}

\paragraph{Example 3} In this example, we test the schemes against the complete model i.e. with all biological and physical processes. To this end, we take $\mu_0 = e^{-x}dx$, $g(x) = 2-2e^{x-20}$, $\beta (x) = 2$, $d(x) = 1$, $\kappa (x,y) = 1$, $a(x) = x$, and $b(y,\cdot) = \frac{2}{y}$. The simulation is performed over the finite domain $(x,t) \in [0,20]\times [0,0.5]$. To our knowledge, the solution of this problem is unknown.

\begin{table}[h]
\center 
\begin{tabular}{|cc|ccc|ccc|}
\hline
\multicolumn{2}{|c}{}                          & \multicolumn{2}{c|}{\textbf{Explicit}}                                                              & \multicolumn{2}{c|}{\textbf{Semi-Implicit}}                                                         \\ \hline
\multicolumn{1}{|c|}{\textbf{Nx}} & \textbf{Nt} & \multicolumn{1}{c|}{\textbf{BL Error}} & \multicolumn{1}{c|}{\textbf{Order}} & \multicolumn{1}{c|}{\textbf{BL Error}} & \multicolumn{1}{c|}{\textbf{Order}} \\ \hline
\multicolumn{1}{|c|}{100}         & 250         & \multicolumn{1}{c|}{0.0023026}         & \multicolumn{1}{c|}{}                        & \multicolumn{1}{c|}{0.0028799}                 & \multicolumn{1}{c|}{     }               \\ \hline
\multicolumn{1}{|c|}{200}         & 500         & \multicolumn{1}{c|}{0.00085562}        & \multicolumn{1}{c|}{1.4282}                       & \multicolumn{1}{c|}{0.00076654       }                  & \multicolumn{1}{c|}{1.9096       }               \\ \hline
\multicolumn{1}{|c|}{400}         & 1000        & \multicolumn{1}{c|}{0.0002743}         & \multicolumn{1}{c|}{1.6412}                    & \multicolumn{1}{c|}{0.00076654}                  & \multicolumn{1}{c|}{1.9549}                            \\ \hline
\multicolumn{1}{|c|}{800}         & 2000        & \multicolumn{1}{c|}{7.5404e-05}        & \multicolumn{1}{c|}{1.8631}                    & \multicolumn{1}{c|}{5.021e-05}                  & \multicolumn{1}{c|}{1.9775}                                \\ \hline
\multicolumn{1}{|c|}{1600}           & 4000           & \multicolumn{1}{c|}{1.9495e-05}                 & \multicolumn{1}{c|}{1.9515   }                      & \multicolumn{1}{c|}{1.2651e-05}                  & \multicolumn{1}{c|}{1.9887}                     \\ \hline
\multicolumn{2}{|c}{}                          & \multicolumn{2}{c|}{\textbf{Explicit (1st order)}}                                                              & \multicolumn{2}{c|}{\textbf{Semi-Implicit (1st order)}}                                                         \\ \hline
\multicolumn{1}{|c|}{800}        & 2000        & \multicolumn{1}{c|}{0.0092432}        & \multicolumn{1}{c|}{0.97728}                       & \multicolumn{1}{c|}{0.0014192}        & \multicolumn{1}{c|}{0.98355}           \\ \hline
\end{tabular}
\caption{\label{TableEverything}Error and order of convergence for example 3. Here Nx and Nt represent the number of points in $x$ and $t$, respectively.
The numerical result in the last row for the 1st order variant is generated from the scheme presented in \cite{AckLyonSaint3}.}
\end{table}

\paragraph{Example 4} As mentioned in \cite{AckLyonSaint3}, the mixed discrete and continuous fragmentation model studied in \cite{BairdSuli1, BairdSuli2}, with adjusted assumptions, is a special case of model \eqref{CoagModel}. Indeed, by removing the biological and coagulation terms and letting the kernel
\[(b(y,\cdot),\phi) = \sum_{i=1}^N b_i(y) \phi(ih) + \int_{Nh}^y \phi(x) b^c(y,x) dx \]
with $\support \ b^c(y,\cdot) \subset [Nh,y]$ for some $h>0$, we have the mixed model in question. We wish to demonstrate the finite difference scheme presented here maintains this mixed structure.  

To this end, we take the fragmentation kernel 
\[ b^c(y,x) = \frac{2}{y}, \quad b_i(y) =  \frac{2}{y}, \text{ and } a(x) = x^{-1},\] with initial condition $\mu = \sum_{i=1}^5 \delta_i + \chi_{[5,15]}(x)dx$, where $\chi_A$ represents the characteristic function over the set $A$. This is similar to some examples in \cite{BairdSuli2} where more detail and analysis are provided. In Figure \ref{fig:MixedFrag}, we present the simulation of this example. Notice, the mixed structure is preserved in finite time. For examples of this type, the scheme could be improved upon by the inclusion of mass conservative fragmentation terms similar to those presented in \cite{AckLyonSaint2}.

\begin{figure}[h!]
    \centering
    \includegraphics[scale=0.35]{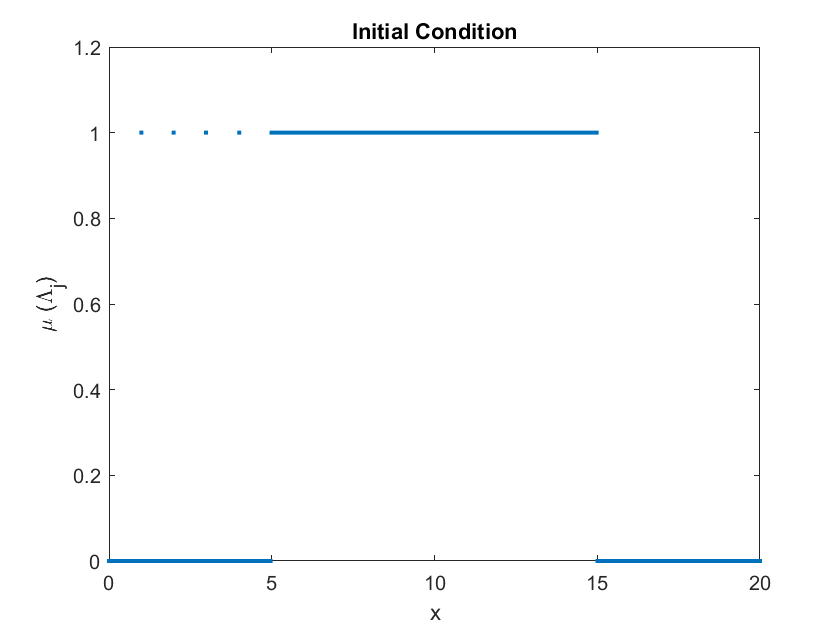}
    \includegraphics[scale=0.35]{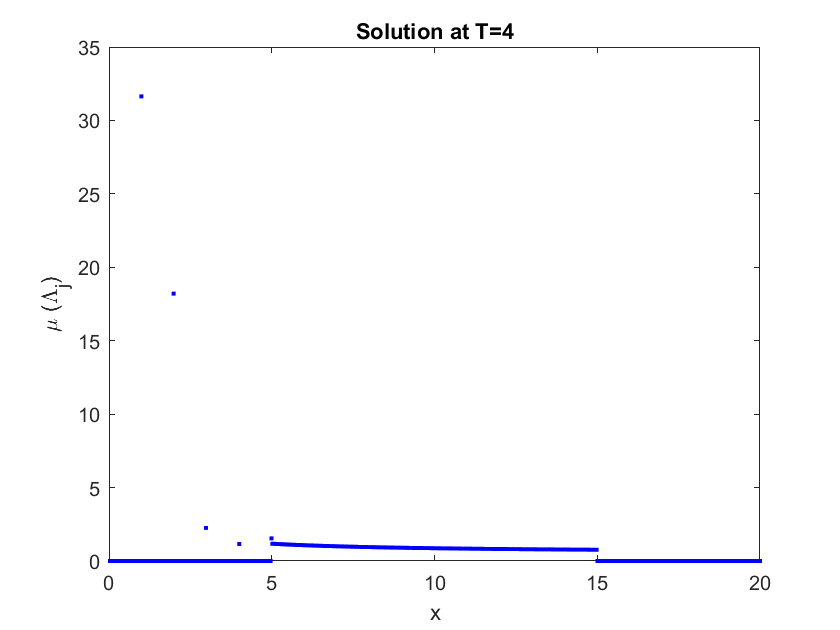}
    \caption{Initial condition and numerical solution at time $T=4$ of example 4.}
    \label{fig:MixedFrag}
\end{figure}

\section{Conclusion}\label{Sec_Conclusion}

In this paper, we have lifted two of the first order finite difference schemes presented in \cite{AckLyonSaint3} to second order high resolution schemes using flux limiter methods. The difference between both schemes is only found in the coagulation term where the semi-implicit scheme is made linear. In context of standard structured population models (i.e. without coagulation or fragmentation), these type of schemes have been shown to be well-behaved in the presences of discontinuities and singularities. This quality makes them a well fit tool for studying PDEs in spaces of measures. We prove the convergence of both schemes under the assumption of natural CFL conditions. The order of convergence of both schemes is then tested numerically with previously used examples. 

In summary, the schemes preform as expected in the presence of smooth initial conditions. In all such simulations, the numerical schemes presented demonstrate a convergence rate of order 2. For simulations with biological terms, this convergence rate is expected to drop when singularities and discontinuities occur as demonstrated in \cite{AckLyonSaint}. Mass conservation of the schemes, an important property for coagulation/fragmentation processes, is discussed in detail in \cite{AckLyonSaint2,AckLyonSaint3}.

\paragraph{Acknowledgments:} The research of ASA is supported in part by funds from R.P. Authement Eminent Scholar and Endowed Chair in Computational Mathematics at the University of Louisiana at Lafayette. RL is grateful for the support of the Carl Tryggers Stiftelse via the grant CTS 21--1656.

\section{Appendix}\label{Sec_Append}
\subsection{Proof of Lemmas \ref{ThrmTVBdPos} and \ref{ThrmLip} }
In this section, we present the proofs of Lemmas \ref{ThrmTVBdPos} and \ref{ThrmLip} for the explicit coagulation term. The semi-implicit term follows from similar arguments in the same fashion as \cite{AckLyonSaint3}. 

\paragraph{Proof of Lemma \ref{ThrmTVBdPos}}
\begin{proof}
We first prove via induction that for any $k = 1,2,\dots, \bar{k},$ $\mu_{\Delta x}^k$ satisfies the following:
\begin{itemize}
\item[(i)] $\mu^k_{\Delta x} \in \M^+(\R^+)$ i.e. $m_j^k \geq 0$ for all $j =1,\dots, J$, 
\item[(ii)] $\|\mu_{\Delta x}^k\|_{TV} \leq \|\mu^0_{\Delta x}\|_{TV}  (1+(\zeta + C_b C_a )\Delta t)^k .$
\end{itemize}
Then, the TV bound in the Lemma follows from standard arguments (see e.g. Lemma 4.1 in \cite{AckLyonSaint3}). We prove this Theorem for the choice of the explicit coagulation term, $\mathcal{C}_{j,k}^{\text{exp}},$ as the implicit case is similar and more straight forward. 

We begin by showing that $m_j^{k+1} \geq 0$ for every $j = 1,2, \dots, J$. Notice by way of \eqref{SO2Exp}, this reduces down to showing 
\begin{equation*}
\frac{\Delta t}{\Delta x}A_j^k + \Delta t (d_j^k + a_j)+ \Delta t \sum_{i=1}^J \kappa_{i,j}m_i^k  \leq 1.
\end{equation*}
Indeed, by the CFL condition \eqref{CFL}, induction hypothesis, and  
\[ \sumi\kappa_{i,j}m_i^k \le C_\kappa \sumi m_i^k = C_\kappa \|\mu^k_{\Delta x}\|_{TV}
\le C_\kappa \|\mu^0_{\Delta x}\|_{TV} \exp((\zeta + C_b C_a )T), \] 
we arrive at the result.

For the TV bound, we have since the $m_j^k$ are non-negative, $\|\mu_{\Delta x}^k\|_{TV} = \sumj m_j^k$. By rearranging \eqref{SO2Exp} and summing over $j = 1,2,\dots, J$ we have
\begin{equation}\label{Equ100}
\begin{split}
\|\mu^{k+1}_{\Delta x}\|_{TV} & 
\le \sumj m^{k}_j + \frac{\Delta t}{\Delta x}\sumj \Big(f^k_{j-\frac12}-f^k_{j+\frac12} \Big)
+ \Delta t\sumj\sum_{i=j}^J b_{i,j}a_i m_i^k   \\
& \hspace{2cm}\left. \qquad+ \Delta t \Big(\frac{1}{2} \sumj \sum_{i=1}^{j-1} \kappa_{i,j-i}m_i^{k} m_{j-i}^k 
  - \sumj\sumi \kappa_{i,j} m_i^k m_j^{k} \Big).
\right.
\end{split}
\end{equation}
To bound the right-hand side of equation \eqref{Equ100}, we directly follow the arguments of Lemma 4.1 in \cite{AckLyonSaint3} which yields 
\[  \|\mu^{k+1}_{\Delta x}\|_{TV}\le (1+(\zeta+C_aC_b)\Delta t)\sumj m^k_j 
= (1+(\zeta+C_aC_b)\Delta t) \|\mu^k_{\Delta x}\|_{TV}. \]
Using the induction hypothesis, we obtain 
$  \|\mu^{k+1}_{\Delta x}\|_{TV}\le  \|\mu^0_{\Delta x}\|_{TV} (1+(\zeta + C_b C_a )\Delta t)^{k+1}$ 
as desired.
\end{proof}

\paragraph{Proof of Lemma \ref{ThrmLip}}

\begin{proof}
For $\phi \in W^{1,\infty}(\R^+)$ with $\|\phi\|_{W^{1,\infty}} \leq 1$, 
and denoting $\phi_j:=\phi(x_j)$, we have for any $k$, 
\begin{align*}
(\mu^{k+1}_{\Delta x}-\mu^k_{\Delta x}, \phi) 
=&  \sumj  (m^{k+1}_{j}-m^k_{j}) \phi_j \\
\leq& \Delta t \sumj  \phi_j \Big(\frac{1}{\Delta x} (f_{j-\frac12}^k   -f_{j+\frac12}^k)
-  d_j^km_j^k - a_jm_j^k \\
& \quad +\frac{1}{2} \sum_{i=1}^{j-1}\kappa_{i,j-i}m_i^{k} m_{j-i}^k - \sumi \kappa_{i,j} m_i^k m_j^{k}  
+ \sum_{i=j}^J b_{i,j}a_i m_i^k\Big).
\end{align*}
Let $C$ be the right-hand side of the TV-bound from Lemma \ref{ThrmTVBdPos}, we then see 
\[ (\mu^{k+1}_{\Delta x}-\mu^k_{\Delta x}, \phi) 
\le \frac{\Delta t}{\Delta x} \sumj  \phi_j (f_{j-\frac12}^k   -f_{j+\frac12}^k)
+ \Delta t (\zeta+C_a+C_bC_a+\frac32 C_\kappa C^* )C^*.  
\] 
Moreover, since $g_J^k=0$ the sum in the right-hand side takes the form 
\begin{eqnarray*}
\phi_1g_0^km_0^k + \sum_{j=1}^{J-1} (\phi_{j+1}-\phi_j) f_{j+\frac12}^k  
= \Delta x \phi_1 \sumj {}^* \beta_j^k m_j^k + \sum_{j=1}^{J-1} (\phi_{j+1}-\phi_j) f_{j+\frac12}^k
\le 3.5\Delta x\zeta C^*. 
\end{eqnarray*}
We thus obtain 
\begin{align*}
(\mu^{k+1}_{\Delta x}-\mu^k_{\Delta x}, \phi) \le L\Delta t,\qquad L:=(3.5\zeta+C_a+C_bC_a+\frac32 C_\kappa C^* )C^*. 
\end{align*}
Taking the supremum over $\phi$ gives $\|\mu^{k+1}_{\Delta x}-\mu^k_{\Delta x}\|_{BL}\le L\Delta t$ for any $k$. 
The result follows. 
\end{proof}

\bibliographystyle{apa}

\end{document}